\newcommand{\N}{\mathbb N}
\theoremstyle{definition}
\newtheorem{lem}{Lemma}
\newtheorem{prop}[lem]{Proposition}
\newtheorem{cor}[lem]{Corollary}
\date{}
\title{Partitioning permutations into monotone subsequences}
\author{David Wärn}
\affil{\texttt{codw2@cam.ac.uk}}
\begin{document}

\maketitle

\begin{abstract}

A permutation is \emph{$k$-coverable} if it can be partitioned into $k$
monotone subsequences. Barber conjectured that, for any given permutation, if
every subsequence of length $k+2 \choose 2$ is $k$-coverable then the
permutation itself is $k$-coverable. This conjecture, if true, would be best
possible.

Our aim in this paper is to disprove this conjecture for all $k \ge 3$. In
fact, we show that for any $k$ there are permutations such that every
subsequence of length at most $(k/6)^{2.46}$ is $k$-coverable while the permutation
itself is not.

\end{abstract}

\section{Introduction}

By \emph{permutation of length $n$}, we mean a permutation $\pi$ of the set
$[n] \coloneqq \{1, \ldots, n\}$ written as a sequence $\pi(1)\ \pi(2)\ \ldots\
\pi(n)$. We say a permutation $\pi$ \emph{contains} another permutation $\tau$,
or $\tau$ is a pattern of $\pi$, if $\pi$ has a subsequence ordered like
$\tau$.

A (possibly empty) subsequence of a permutation is \emph{monotone} if it is
either increasing or decreasing. A permutation is $k$-coverable if it can be
partitioned into $k$ monotone subsequences. If a permutation $\pi$ is
$k$-coverable, then so is every pattern of $\pi$. Barber \cite{barber}
conjectured that, conversely, if every pattern of $\pi$ of length at most $k+2
\choose 2$ is $k$-coverable, then so is $\pi$ itself. This has been verified
for $k \le 2$.

We say $\pi$ is \emph{$k$-critical} if every proper pattern of $\pi$ is
$k$-coverable while $\pi$ itself is not. Barber's conjecture is equivalent to
the assertion that critical permutations have length at most $k+2 \choose 2$. A
simple inductive argument shows that any $k$-critical permutation has length at
least $k+2 \choose 2$. So Barber's conjecture, if true, would be the best
possible.

In this paper, we show that the conjecture is false for every $k \ge 3$. In
fact the length of a $k$-critical permutation need not even be $O(k^2)$ -- we
show that it can be as big as $(k/6)^{2.46}$.

K{\'e}zdy et al \cite{kezdy} showed that $k$-critical permutations cannot be
arbitrarily long. In fact they showed that the length of a $k$-critical
permutation is at most $k^{O(k^6)}$. Feder and Hell \cite{feder} improved this
to show that the length is at most $k^{O(k^2)}$. We present a version of their
argument. This exponential upper bound is currently the best known.

Wagner \cite{wagner} showed that the problem of recognising $k$-coverable
permutations is NP-complete. Thus, unless $\NP = \coNP$, there is generally no
simple reason why a permutation is not $k$-coverable.

Although this is a paper about permutations, the results all have
graph-theoretic analogues. Given a permutation $\pi$, we can construct a graph
-- called a permutation graph -- with vertex set $[n]$ where $i < j$ are
adjacent if $\pi(i) < \pi(j)$. Thus the problem of partitioning a permutation
into increasing and decreasing subsequences is a special case of the problem of
partitioning the vertex set of a graph into cliques and independent sets. This
is called \emph{graph cocolouring} \cite{lesniak}. One special property of
permutation graphs is that they are perfect. As we will mention in the text,
some of our results generalise to perfect graphs, and some to general graphs.

The plan of the paper is as follows. In Section 2 we make some simple remarks,
and then in Section 3 we give our first examples that disprove Barber's
conjecture. In Section 4, which is really the heart of the paper, we show how
to construct long $k$-critical permutations for general $k$. In Section 5, we
show that Barber's conjecture is true in the special case of separable
permutations. In Section 6, we discuss upper bounds. And finally in Section 7
we mention some open problems.

\section{Initial remarks}

For completeness, we start with the following easy fact. It is equivalent to the
assertion that permutation graphs are perfect, and is a special case of Dilworth's
theorem.

\begin{lem}\label{perfect}
Let $\pi$ be a permutation, and suppose the length of the longest increasing
subsequence of $\pi$ is $s$. Then $\pi$ can be partitioned into $s$
decreasing subsequences.

Similarly, if the length of the longest decreasing subsequence of $\pi$ is $r$, then
$\pi$ can be partitioned into $r$ increasing subsequences.
\end{lem}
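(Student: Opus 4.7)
The plan is to give the standard greedy/labelling proof, which is the natural specialisation of Mirsky's theorem (the dual of Dilworth) to permutations.

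For each position $i \in [n]$, let $f(i)$ denote the length of the longest increasing subsequence of $\pi$ ending at position $i$. By hypothesis, $f(i) \in \{1, 2, \ldots, s\}$ for every $i$. I would then partition $[n]$ according to the value of $f$, producing at most $s$ classes $C_1, \ldots, C_s$ with $C_j = f^{-1}(j)$, and claim that the subsequence of $\pi$ indexed by each $C_j$ is decreasing.

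The verification of this claim is the only real content, and it is short. Suppose $i_1 < i_2$ are both in $C_j$. If it were the case that $\pi(i_1) < \pi(i_2)$, then any increasing subsequence of length $j$ ending at $i_1$ could be extended by $\pi(i_2)$ to give an increasing subsequence of length $j+1$ ending at $i_2$, contradicting $f(i_2) = j$. Hence $\pi(i_1) > \pi(i_2)$, so $C_j$ indeed yields a decreasing subsequence.

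The second statement follows by symmetry: apply the same argument to the reverse permutation $i \mapsto \pi(n+1-i)$, or equivalently label each position by the length of the longest decreasing subsequence ending there. I do not anticipate any real obstacle here; the only mild care needed is to observe that some of the classes $C_j$ may be empty, but the number of nonempty ones is at most $s$, as required.
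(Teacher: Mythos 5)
Your proposal is correct and is essentially identical to the paper's proof: you label each position by the length of the longest increasing subsequence ending there (the paper's $D_t$ is your $C_j$), observe that two positions with the same label cannot form an increasing pair, and obtain the second part by symmetry. No gaps.
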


\begin{proof}
By symmetry, it suffices to prove the first part.

For $1 \le t \le s$, let $D_t$ be the set of $i \in [n]$ such that the longest
increasing subsequence of $\pi$ ending at $\pi(i)$ has length $t$. The
$D_t$ certainly partition $[n]$. Moreover, if $i < j$ and $\pi(i) <
\pi(j)$, then we can get an increasing subsequence ending at $\pi(j)$
by appending $\pi(j)$ to an increasing subsequence ending at $\pi(i)$.
So each $D_t$ must be decreasing, as needed.
\end{proof}

Note that this result is sharp: we cannot hope to partition $\pi$ into less
than $s$ decreasing subsequences. 

We will now use the previous lemma to say something about partitions into
monotone subsequences. For a permutation $\pi$ and nonnegative integers $r, s
\in \N$, we say $\pi$ is $(r,s)$-coverable if $\pi$ can be partitioned into $r$
increasing sequences and $s$ decreasing sequences. Thus $\pi$ is $k$-coverable
iff $\pi$ is $(r, s)$-coverable for some $r+s = k$. We say $\pi$ is $(r,
s)$-critical if every proper pattern is $(r,s)$-coverable while $\pi$ itself is
not.

We write $D(\pi) \subseteq \N^2$ for the set of pairs $(r, s)$ such that $\pi$
is \emph{not} $(r,s)$-coverable. 

\begin{lem}\label{dle}
For any permutation $\pi$ of length $n$, we have $|D(\pi)| \le n$.
\end{lem}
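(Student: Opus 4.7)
The plan is to prove this by induction on $n$, the length of $\pi$. The base case $n = 0$ is immediate since $D(\emptyset) = \emptyset$. For the inductive step with $n \ge 1$, I would let $\ell$ denote the length of the longest increasing subsequence of $\pi$, fix one such subsequence $I$, and let $\pi'$ denote the permutation of length $n - \ell$ obtained by deleting the entries of $I$.

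The main step is to split $D(\pi)$ according to whether the first coordinate vanishes. For pairs of the form $(0, s)$: Lemma \ref{perfect} (together with its pigeonhole converse -- any increasing subsequence has at most one element from each decreasing part) tells us that $\pi$ is $(0, s)$-coverable iff $s \ge \ell$. Hence exactly $\ell$ pairs $(0, s)$ lie in $D(\pi)$, namely those with $0 \le s < \ell$. For pairs $(r, s)$ with $r \ge 1$, I claim the shift $(r, s) \mapsto (r - 1, s)$ is an injection from $D(\pi) \cap \{r \ge 1\}$ into $D(\pi')$. Indeed, were $\pi'$ to be $(r - 1, s)$-coverable, adjoining $I$ as one further increasing part would make $\pi$ itself $(r, s)$-coverable, contradicting $(r, s) \in D(\pi)$.

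Combining the two parts with the inductive hypothesis $|D(\pi')| \le n - \ell$ yields
\[
|D(\pi)| \;=\; \ell + |D(\pi) \cap \{r \ge 1\}| \;\le\; \ell + |D(\pi')| \;\le\; \ell + (n - \ell) \;=\; n,
\]
as required.

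The subtle point is picking what to peel off. A naive induction on $n$ that deletes a single entry fails, because $|D(\pi)| - |D(\pi')|$ can exceed $1$: for example, every one-element deletion from $213$ drops $|D|$ from $3$ down to $1$. Peeling off an entire longest increasing subsequence is exactly the right move, since the $\ell$ new pairs $(0, 0), \ldots, (0, \ell - 1)$ appearing along the $r = 0$ column of $D(\pi)$ are precisely offset by the fact that $\pi'$ is shorter by $\ell$; this is the quantitative content of Lemma \ref{perfect}.
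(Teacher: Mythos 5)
Your proof is correct and is essentially the paper's proof, just with the details spelled out: both peel off a longest increasing subsequence $I$, bound the $r=0$ column of $D(\pi)$ using Lemma \ref{perfect}, and inject the $r\ge 1$ part of $D(\pi)$ into $D(\pi')$ via $(r,s)\mapsto(r-1,s)$. The paper compresses this bookkeeping into the single line $|D(\pi)| \le |D(\tau)| + k$; your version makes the split explicit, and your remark about why deleting a single entry fails is a nice motivating observation absent from the paper.
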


For example, if $n < {k+2 \choose 2}$, then $D(\pi)$ cannot contain all of $\{
(r, s) \in \N^2 \mid r+s \le k\}$, so $\pi$ must be $k$-coverable. Similarly,
if $n < (r+1)(s+1)$, then $D(\pi)$ cannot contain all of $\{0\ldots r\} \times
\{0\ldots s\}$, so $\pi$ must be $(r,s)$-coverable.

\begin{proof}
If $n = 0$, then $D(\pi) = \emptyset$. So assume $n \ge 1$. Let $I$ be a
longest increasing subsequence of $\pi$, and say it has length $k$. Let $\tau$
be the permutation of length $n - k$ ordered like $\pi \setminus I$. Note that
if $\tau$ is $(r, s)$-coverable, then $\pi$ is $(r+1, s)$-coverable. Also, by
\ref{perfect}, $\pi$ is $(0, k)$-coverable.

From this and inductive hypothesis we get that $|D(\pi)| \le |D(\tau)| + k \le
n-k+ k = n$, as desired.
\end{proof}

We remark that the graph-theoretic analogue of Lemma \ref{dle} is true for
perfect graphs by the same proof, but false for general graphs.

Let us write $C(k)$ for the length of the longest $k$-critical permutation, and
$C(r, s)$ for the length of the longest $(r,s)$-critical permutation. We will
later see that $C(k)$ and $C(r,s)$ are always finite.

We certainly have $C(r, s) \ge (r+1)(s+1)$, since any $(r,s)$-critical
permutation is at least this long. Lemma \ref{perfect} shows that we get
equality $C(r, s) = (r+1)(s+1)$ when $r = 0$ or $s = 0$. One can show that
$C(1, 1) = 4 = (1+1)(1+1)$ as follows.  A permutation is $(1,1)$-coverable iff
its graph is a \emph{split} graph -- that is, a graph whose vertex set can be
partitioned into a clique and an independent set.  A graph is split iff it does
not contain a 5-cycle, a 4-cycle, or the complement of a 4-cycle as an induced
subgraph \cite{gibbons}. Finally, no permutation graph contains an induced
5-cycle, since it is not perfect, so $C(1,1) = 4$.

Similarly, we have $C(k) \ge {k+2 \choose 2}$ since any $k$-critical
permutation is at least this long. It is easy to see that $C(0) = 1 = {0+2
\choose 2}$ and $C(1) = 3 = {1 + 2 \choose 2}$. J{\o}rgensen \cite{jorgen}
characterised the graph analogues of 2-critical permutations, and from this
characterisation we get $C(2) = 6 = {2 + 2 \choose 2}$. (This again uses the
fact that permutation graphs do not contain induced long odd cycles.)

As we will see, we never get equality in $C(k) \ge {k+2 \choose 2}$ or $C(r, s)
\ge (r+1)(s+1)$ except for the cases listed above.

\section{Specific examples}

\begin{wrapfigure}{r}{0.27\textwidth}
\vspace{-20pt}
\centering
\begin{tikzpicture}[scale=0.3]
\edef\x{0} \foreach \y in {
10, 5, 1, 7, 11, 4, 9, 2, 6, 12, 8, 3
} { \pgfmathparse{\x+1} \xdef\x{\pgfmathresult}
\path[draw,fill,fill opacity=0.5] (\x, \y) rectangle ++(1,1);
}
\end{tikzpicture}
\vspace{-5pt}
	\caption{$\pi_{12}$}
	\label{twelve}
\vspace{-10pt}
\end{wrapfigure}

We now present some specific examples of critical permutations that are a bit
longer than expected. We will later use these to construct infinite families of
critical permutations, but the exact structure of these starting examples will
be irrelevant. These examples and many more were found by computer search.

One particularly beautiful 3-critical permutation, which we will call
$\pi_{12}$, shows that $C(3) \ge 12$. In numbers, it is 10 5 1 7 11 4 9 2 6 12 8
3, but it is more easily appreciated by drawing the points $(i, \pi(i))$ in the
plane, as pictured.

\begin{wrapfigure}{r}{0.2\textwidth}
\vspace{-10pt}
\centering
\begin{tikzpicture}[scale=0.3]
\edef\x{0} \foreach \y in {
4, 1, 6, 0, 5, 2, 8, 7, 3
} { \pgfmathparse{\x+1} \xdef\x{\pgfmathresult}
\path[draw,fill,fill opacity=0.5] (\x, \y) rectangle ++(1,1);
} \end{tikzpicture}
\vspace{-5pt}
	\caption{$\pi_9$}
	\label{nine}
\vspace{-10pt}
\end{wrapfigure}

It is clear from the picture that $\pi_{12}$ has four-fold rotational symmetry.
This makes it feasible to check by hand that $\pi_{12}$ really is 3-critical.
We note that $\pi_{12}$ is not (0, 3)- or (3, 0)-coverable, since it has
increasing and decreasing subsequences of length 4. It is not (1, 2)-coverable,
since its subsequence 5 1 7 6 12 8 is ordered like 2 1 4 3 6 5 which is not (1,
2)-coverable. Hence by symmetry it is not (2,1)-coverable either. So it is not
3-coverable. On the other hand, if we remove the term $1 = \pi_{12}(3)$, then
we can cover the remaining 11 terms by 5 6 12 and two decreasing subsequences.
If we instead remove $5 = \pi_{12}(2)$ or $7 = \pi_{12}(4)$, then we use 1 4 6
12 and two decreasing subsequences. By symmetry, any pattern of length 11 is
3-coverable, so $\pi_{12}$ is 3-critical.

\begin{wrapfigure}{r}{0.2\textwidth}
\vspace{-10pt}
\centering
\begin{tikzpicture}[scale=0.2]
\edef\x{0} \foreach \y in {
12,14,5,10,3,9,1,7,15,13,11,4,2,8,6
} { \pgfmathparse{\x+1} \xdef\x{\pgfmathresult}
\path[draw,fill,fill opacity=0.5] (\x, \y) rectangle ++(1,1);
} \end{tikzpicture}
\vspace{-5pt}
	\caption{$\pi_{15}$}
	\label{fifteen}
\vspace{-10pt}
\end{wrapfigure}

Our second example, which we will call $\pi_9$, shows that $C(2, 1) \ge 9$. In
numbers, it is 5 2 7 1 6 3 9 8 4. Verifying that $\pi_9$ is (2,1)-critical by
hand would be labourious, but it is easy for a computer program.

Our third and final example, which we will call $\pi_{15}$, shows that $C(2,2)
\ge 15$. In numbers, it is 12 14 5 10 3 9 1 7 15 13 11 4 2 8 6. This
permutation has a clear asymmetry to it: the longest decreasing subsequence has
length 6, but the longest increasing subsequence has length 3. The latter
property will be useful later.

\section{Combining critical permutations}

We will now explain the main idea of this paper: combining critical
permutations to get new critical permutations. The next lemma is the simplest
example of this.

Our basic tools are the direct and skew sums of permutations. Given
permutations $\pi$, $\sigma$ of lengths $n$ and $m$, their direct sum $\pi
\oplus \sigma$ of length $n + m$ is given by the sequence $\pi(1) \ldots
\pi(n), \sigma(1)+n \ldots \sigma(m)+n$. Their skew sum $\pi \ominus \sigma$ is
similarly given by $\pi(1) + m \ldots \pi(n) + m, \sigma(1) \ldots \sigma(m)$.
For example, $(1 3 2) \oplus (2 1) = 1 3 2 5 4$ and $(1 3 2) \ominus (2 1) = 3
5 4 2 1$.

\begin{lem}\label{enkel}
For any $r_1, r_2, s \in \N$, we have $C(r_1 + r_2 + 1, s) \ge C(r_1, s) +
C(r_2, s)$.
Similarly, for $r, s_1, s_2 \in \N$, we have $C(r, s_1 + s_2 + 1) \ge
C(r, s_1) + C(r, s_2)$
\end{lem}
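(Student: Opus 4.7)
The plan is to construct a witnessing permutation by combining critical examples via the skew sum. Specifically, let $\pi_1$ be an $(r_1, s)$-critical permutation of length $C(r_1, s)$ and $\pi_2$ an $(r_2, s)$-critical permutation of length $C(r_2, s)$, and set $\sigma = \pi_1 \ominus \pi_2$. I will show that $\sigma$ itself is $(r_1 + r_2 + 1, s)$-critical, which gives the first inequality; the second inequality will follow by the dual argument using the direct sum $\pi_1 \oplus \pi_2$.

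The key structural observation is that in the skew sum, every value on the $\pi_1$-side strictly exceeds every value on the $\pi_2$-side while occurring at an earlier position. Consequently, any increasing subsequence of $\sigma$ is confined entirely to one of the two halves, while decreasing subsequences may freely span both. To prove $\sigma$ is not $(r_1+r_2+1,s)$-coverable, I would assume such a cover exists: say $r_1'$ of its increasing subsequences lie on the $\pi_1$-side and $r_2'$ on the $\pi_2$-side, with $r_1' + r_2' = r_1 + r_2 + 1$. Restricting each of the $s$ decreasing subsequences to either side exhibits $\pi_1$ as $(r_1', s)$-coverable and $\pi_2$ as $(r_2', s)$-coverable. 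Criticality of $\pi_1$ and $\pi_2$ then forces $r_1' \ge r_1 + 1$ and $r_2' \ge r_2 + 1$, contradicting the sum.

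For criticality of $\sigma$, take any proper pattern $\tau$; it omits some point, which without loss of generality lies on the $\pi_1$-side. The restriction of $\tau$ to that side is then a proper pattern of $\pi_1$ and hence $(r_1,s)$-coverable. The other side $\pi_2$ is $(r_2+1,s)$-coverable by the standard trick: remove any single point, $(r_2,s)$-cover the remainder by criticality, and reinstate the removed point as its own singleton increasing subsequence. I now assemble these into a cover of $\tau$: the $r_1 + (r_2+1)$ increasing subsequences each sit within a single half, and I pair up the $s$ decreasing subsequences on each side (padding with empty sequences if needed) and concatenate each pair. Each concatenation is a legitimate decreasing subsequence of $\sigma$, since its $\pi_1$-portion comes earlier and takes larger values than its $\pi_2$-portion. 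This gives an $(r_1+r_2+1, s)$-cover of $\tau$.

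The only delicate step is the non-coverability argument, where one must be sure nothing is gained by letting decreasing subsequences span both halves — but the clean device of restricting each decreasing subsequence to a half and noting that this gives an $(r_i', s)$-cover of $\pi_i$ handles this immediately. Everything else is bookkeeping about how the two halves interact in a skew sum, and the symmetric statement for $(r, s_1 + s_2 + 1)$ falls out by interchanging the roles of increasing and decreasing (equivalently, by replacing $\sigma$ with $\pi_1 \oplus \pi_2$).
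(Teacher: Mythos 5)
Your proof is correct and takes essentially the same route as the paper: form the skew sum of an $(r_1,s)$-critical and an $(r_2,s)$-critical permutation, show non-coverability by observing that increasing subsequences are confined to one half (forcing at least $r_1+1$ plus $r_2+1$ of them), and establish criticality by covering the modified half via the criticality hypothesis and the untouched half via the singleton-increasing trick. Your additional care in restricting the $s$ decreasing subsequences to each half and in concatenating them back is exactly the bookkeeping the paper leaves implicit.
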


Note that we would get equality above if we replaced $C(r, s)$ with
$(r+1)(s+1)$ throughout.

\begin{proof}
By symmetry, it suffices to prove the first part.

Given an $(r_1, s)$-critical permutation $\pi$ and an $(r_2, s)$-critical
permutation $\sigma$, we claim that $\pi \ominus \sigma$ is $(r_1 + r_2+1,
s)$-critical. The lemma then follows by taking $\pi$, $\sigma$ as long as
possible.

First, suppose $\pi \ominus \sigma$ were $(r_1 + r_2 + 1, s)$-coverable. Each
increasing subsequence of $\pi \ominus \sigma$ lies entirely in either $\pi$
or $\sigma$. We must have at least $r_1 + 1$ increasing subsequences lying in
$\pi$, since $\pi$ is not $(r_1, s)$-coverable, and similarly at least
$r_2+1$ increasing subsequences lying entirely in $\sigma$. This is a
contradiction as $(r_1 + 1) + (r_2 + 1) > r_1+r_2+1$.

Now consider a proper pattern of $\pi \ominus \sigma$. Without loss of
generality it is of the form $\tau \ominus \sigma$ where $\tau$ is a proper
pattern of $\pi$. By assumption, $\tau$ is $(r_1,s)$-coverable. It remains to
check that $\sigma$ is $(r_2+1,s)$-coverable. Pick some term in $\sigma$. By
assumption, $\sigma$ minus this term is $(r_2, s)$-coverable, so by putting in
the term as an increasing subsequence of length 1, $\sigma$ is $(r_2+1,
s)$-coverable, as needed.
\end{proof}

Starting from the fact that $C(2, 1) \ge 9$, repeated application of Lemma
\ref{enkel} shows that $C(r, s) > 1.49(r+1)(s+1)$ for $r, s$ large enough, and
also that $C(r, s) > (r+1)(s+1)$ when $r \ge 2$, $s \ge 1$.

We now look more closely at direct sums, and in particular at $D(\pi \oplus
\sigma)$. First note two properties of $D(\pi) \subseteq \N^2$: it is finite,
and downward closed, in the sense that if $r' \le r$, $s' \le s$, and $(r, s)
\in D(\pi)$, then $(r', s') \in D(\pi)$. We will call a subset of $\N^2$ with
these two properties a \emph{downset}. Our prototypical downset is the
`triangle' $T(k) \coloneqq \{ (r, s) \mid r+s \le k\}$. Note that $\pi$ is
$k$-coverable iff $T(k) \not \subseteq D(\pi)$.

Given downsets $A$, $B$, we let $A \oplus B$ be the downset consisting of $(r,
s)$ such that whenever $s = s_1 + s_2$, either $(r, s_1) \in A$ or $(r, s_2)
\in B$. We can think of this as merging $A$ and $B$ column by column. We have
$|A \oplus B| = |A| + |B|$. Similarly we let $A \ominus B$ consist of $(r, s)$
such that whenever $r = r_1 + r_2$, either $(r_1, s) \in A$ or $(r_2, s) \in
B$. Here $A$ and $B$ are merged row by row, and again $|A \ominus B| = |A| +
|B|$. The following lemma motivates these definitions.

\begin{lem}\label{hom}
For any permutations $\pi$, $\sigma$, we have $D(\pi \oplus \sigma) =
D(\pi) \oplus D(\sigma)$ and $D(\pi \ominus \sigma) = D(\pi) \ominus
D(\sigma)$.
\end{lem}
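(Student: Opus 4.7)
The plan is to prove the direct-sum identity, and then read off the skew-sum identity by symmetry (swapping the roles of increasing and decreasing subsequences corresponds to reflecting the permutation, which interchanges $\oplus$ and $\ominus$ and also swaps coordinates in $\N^2$).

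The key structural observation is that in $\pi \oplus \sigma$, every term coming from $\pi$ sits strictly to the left of and strictly below every term coming from $\sigma$. Consequently, any two such terms form an increasing pair, so every \emph{decreasing} subsequence of $\pi \oplus \sigma$ lies entirely within $\pi$ or entirely within $\sigma$. Increasing subsequences, in contrast, may freely cross the boundary, but their restriction to $\pi$ is still increasing in $\pi$, and likewise for $\sigma$.

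With this in hand I would prove the two inclusions. For the forward direction, suppose $\pi \oplus \sigma$ admits an $(r,s)$-covering. Split the $s$ decreasing subsequences according to which side they lie in, giving counts $s_1 + s_2 = s$. Restricting the $r$ increasing subsequences to each side shows that $\pi$ is $(r, s_1)$-coverable and $\sigma$ is $(r, s_2)$-coverable. For the converse, given an $(r, s_1)$-cover of $\pi$ and an $(r, s_2)$-cover of $\sigma$, keep the $s_1 + s_2$ decreasing subsequences as they are and concatenate the increasing subsequences of $\pi$ with those of $\sigma$ in any pairing; the concatenations remain increasing by the structural observation. Thus $\pi \oplus \sigma$ is $(r,s)$-coverable precisely when there exist $s_1 + s_2 = s$ with $(r, s_1) \notin D(\pi)$ and $(r, s_2) \notin D(\sigma)$, which is exactly the condition defining $D(\pi) \oplus D(\sigma)$.

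No step is particularly delicate; the only thing one has to be careful about is the concatenation pairing in the converse direction, and the fact that if $\pi$ and $\sigma$ use different numbers of increasing subsequences this would cause trouble — but both are $(r, \cdot)$-coverable with the same $r$, so the pairing is unproblematic (padding with empty subsequences if necessary). The skew-sum identity then follows immediately by applying the direct-sum identity to the reverses of $\pi$ and $\sigma$, using that reversing turns $\ominus$ into $\oplus$ and swaps the increasing/decreasing roles, hence swaps coordinates in the corresponding downsets.
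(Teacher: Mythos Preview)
Your proposal is correct and follows essentially the same approach as the paper: both rest on the observation that $\pi \oplus \sigma$ is $(r,s)$-coverable iff there exist $s_1 + s_2 = s$ with $\pi$ $(r,s_1)$-coverable and $\sigma$ $(r,s_2)$-coverable, and then invoke symmetry for the skew sum. The paper states this one-line characterisation without justification, whereas you spell out the underlying structural fact (decreasing subsequences cannot cross the boundary, increasing ones can be concatenated) and the padding detail; your version is simply a more explicit rendering of the same argument.
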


\begin{proof}
For the first part, simply note that $\pi \oplus \sigma$ is $(r, s)$-coverable
iff, for some $s_1$, $s_2$ with $s = s_1 + s_2$, $\pi$ is $(r, s_1)$-coverable
and $\sigma$ is $(r, s_2)$-coverable. The second part is proved analogously.
\end{proof}

Given a permutation $\pi$ and a downset $A$, we say $\pi$ is $A$-coverable if
$\pi$ is $(r,s)$-coverable for some $(r, s) \in A$. As before, we say $\pi$ is
$A$-critical if every proper pattern of $\pi$ is $A$-coverable while $\pi$
itself is not. Thus $\pi$ is $k$-coverable iff $\pi$ is $T(k)$-coverable, and
$k$-critical iff $\pi$ is $T(k)$-critical.

If $\pi$ is $A$-critical and moreover $A = D(\pi)$, then we say $\pi$ is
$A$-minimal. For example, $\pi$ is $T(k)$-minimal iff $\pi$ is $k$-critical and
also $(r,s)$-coverable whenever $r+s = k+1$. By inspection, $\pi_{12}$ is
$T(3)$-minimal.

Let $M(k)$ be the length of the longest $T(k)$-minimal permutation, so that
$M(k) \le C(k)$ and $M(3) \ge 12$. We introduced the notion of $A$-minimal
permutations in order for the following lemma to work.

\begin{lem}\label{min}
If $\pi$ is $A$-minimal and $\sigma$ is $B$-minimal, then $\pi \oplus \sigma$
is $A \oplus B$-minimal, and $\pi \ominus \sigma$ is $A \ominus B$-minimal.
\end{lem}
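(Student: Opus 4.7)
The first step is to apply Lemma~\ref{hom} to get $D(\pi \oplus \sigma) = D(\pi) \oplus D(\sigma) = A \oplus B$ for free; this also immediately gives that $\pi \oplus \sigma$ is not $A \oplus B$-coverable, since $A \oplus B \subseteq D(\pi \oplus \sigma)$. Everything then reduces to showing that every proper pattern of $\pi \oplus \sigma$ is $A \oplus B$-coverable. Since covers restrict to patterns, it suffices to handle single deletions, and by left-right symmetry I may assume the deleted element lies in $\sigma$, so I have to cover $\pi \oplus \sigma''$ where $\sigma''$ is a proper pattern of $\sigma$.

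Next, $B$-criticality of $\sigma$ gives some $(r, s_2) \in B \setminus D(\sigma'')$, i.e., $\sigma''$ is $(r, s_2)$-coverable. The key idea --- and the reason $A$-minimality is needed rather than mere $A$-criticality --- is that I should not combine this with an \emph{arbitrary} cover of $\pi$, but with one sitting on the boundary of $A$. Concretely, set $s_1 = \min\{s : (r, s) \notin A\}$; this minimum is finite by Lemma~\ref{perfect} applied to $\pi$ (take the length of a longest increasing subsequence). Then $\pi$ is $(r, s_1)$-coverable, and combining the two covers column-by-column (pairing up the $r$ increasing sequences from each part, which can be concatenated because every element of $\sigma''$ exceeds every element of $\pi$) exhibits $\pi \oplus \sigma''$ as $(r, s_1 + s_2)$-coverable.

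Finally I verify $(r, s_1 + s_2) \in A \oplus B$: for any decomposition $s_1 + s_2 = u + v$, if $u < s_1$ then $(r, u) \in A$ by minimality of $s_1$, while if $u \ge s_1$ then $v \le s_2$ and hence $(r, v) \in B$ by downward closure. The skew-sum case is entirely dual: work row-by-row, fix $s_2$ from a witness $(r_2, s) \in A \setminus D(\pi'')$ or $B \setminus D(\sigma'')$, and pick $r_1$ at the boundary of the corresponding downset along column $s$. The main obstacle is really just recognising that $s_1$ has to be chosen at the boundary of $A$ --- a naive combination of arbitrary covers need not land in $A \oplus B$; once this is noticed, the column/row merge and the set-theoretic check are routine.
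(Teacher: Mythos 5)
Your proof is correct, but it takes a more constructive route than the paper's. The paper stays entirely at the level of downsets: for a proper pattern $\tau \oplus \sigma$, $A$-minimality together with $A$-criticality gives $D(\tau) \subsetneq A$; combined with the cardinality identity $|X \oplus Y| = |X| + |Y|$ (noted just before Lemma~\ref{hom}) and monotonicity of $\oplus$ in each argument, this yields $D(\tau \oplus \sigma) = D(\tau) \oplus B \subsetneq A \oplus B = D(\pi \oplus \sigma)$, which is exactly $A \oplus B$-coverability of $\tau \oplus \sigma$, and that is the whole argument. You instead produce a concrete witnessing pair $(r, s_1 + s_2)$ by choosing $s_1$ at the boundary of $A$ along row $r$, and verify directly that it lies in $A \oplus B$. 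Both approaches hinge on $A$-minimality at the same pressure point --- one needs $A = D(\pi)$ so that stepping just outside $A$ along a row gives an actual cover of $\pi$; the paper packages this via the size count, you unpack it into an explicit pair. Your version arguably explains better \emph{why} minimality rather than mere criticality is needed, while the paper's is shorter once $|X \oplus Y| = |X| + |Y|$ is in hand. (One small imprecision: the reduction to deleting a single element from one summand is not literally a ``left-right symmetry'' of $\oplus$, but the two cases are handled by analogous arguments, so the conclusion stands.)
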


\begin{proof}
We have $D(\pi \oplus \sigma) = A \oplus B$ by Lemma \ref{hom}. Now consider a
proper pattern of $\pi \oplus \sigma$, without loss of generality of the form
$\tau \oplus \sigma$ with $\tau$ a proper pattern of $\pi$. By assumption,
$D(\tau)$ is a proper subset of $A$. So $D(\tau \oplus \sigma) = D(\tau) \oplus
D(\sigma) = D(\tau) \oplus B$ is a proper subset of $A \oplus B$, as needed.
\end{proof}

The following lemma lets us prove lower bounds on $M(k)$ and hence on $C(k)$.

\begin{lem}\label{epic}
For any $k \in \N$, we have
\begin{align*}
M(k+1) &\ge M(k) + k+2, \\
M(2k+2)&\ge 3M(k) + M(k+1) \textup{, and}\\
M(2k+3)&\ge M(k) + 3M(k+1).
\end{align*}
\end{lem}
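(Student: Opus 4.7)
All three inequalities follow from Lemma~\ref{min} applied to suitable compositions of direct and skew sums. Throughout, I would fix a $T(k)$-minimal permutation $\pi$ of length $M(k)$ and a $T(k+1)$-minimal permutation $\sigma$ of length $M(k+1)$.

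For the first inequality, I would form $\pi \ominus \iota$, where $\iota$ is the increasing permutation of length $k+2$. One checks directly that $D(\iota) = \{(0,s) : 0 \le s \le k+1\}$ and that $\iota$ is $D(\iota)$-minimal, since every proper pattern is a shorter increasing permutation. A short calculation gives $T(k) \ominus D(\iota) = T(k+1)$, so by Lemma~\ref{min} the permutation $\pi \ominus \iota$ is $T(k+1)$-minimal, of length $M(k) + k + 2$.

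For the other two inequalities, the size identities $|T(2k+2)| = 3|T(k)| + |T(k+1)|$ and $|T(2k+3)| = |T(k)| + 3|T(k+1)|$ suggest decompositions using three copies of one triangle and one of the other. The specific identities I would aim for are
\[ T(2k+2) = (T(k) \oplus T(k+1)) \ominus (T(k) \oplus T(k)) \]
and
\[ T(2k+3) = (T(k+1) \oplus T(k+1)) \ominus (T(k) \oplus T(k+1)). \]
Assuming these, Lemma~\ref{min} immediately produces a $T(2k+2)$-minimal permutation $(\pi \oplus \sigma) \ominus (\pi \oplus \pi)$ of length $3M(k) + M(k+1)$ and a $T(2k+3)$-minimal permutation $(\sigma \oplus \sigma) \ominus (\pi \oplus \sigma)$ of length $M(k) + 3M(k+1)$.

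The main task is verifying the two downset identities. My strategy is row-by-row: for a downset $X$ let $x(s) = \max\{r : (r,s) \in X\}$, and observe that unwinding the definition of $\ominus$ gives $(r,s) \in A \ominus B$ iff $r \le a(s) + b(s) + 1$. A routine check shows that $T(j_1) \oplus T(j_2)$ has row-maximum $\lfloor (j_1 + j_2 + 1 - s)/2 \rfloor$ at every row $s \ge 1$, together with a single extra corner point $(\max(j_1, j_2), 0)$ at row $0$ when $j_1 \ne j_2$. Using $\lfloor s/2 \rfloor + \lceil s/2 \rceil = s$, the row-maxima $a(s) + b(s) + 1$ for the two proposed decompositions telescope to $2k + 2 - s$ and $2k + 3 - s$, matching $T(2k+2)$ and $T(2k+3)$ respectively. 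The one place to be careful is the base row $s = 0$, where the extra corner contributed by $T(k) \oplus T(k+1)$ supplies precisely the $+1$ needed for the widths to match up.
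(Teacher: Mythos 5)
Your proposal is correct and follows essentially the same route as the paper: reduce to Lemma~\ref{min} and verify the corresponding downset identities. The only cosmetic difference is that you swap the two $\ominus$-summands in the decompositions of $T(2k+2)$ and $T(2k+3)$ (hence build $(\pi\oplus\sigma)\ominus(\pi\oplus\pi)$ instead of $(\pi\oplus\pi)\ominus(\pi\oplus\sigma)$, etc.); since $\ominus$ on downsets merges widths row by row and is therefore commutative, this is immaterial, and your row-maximum telescoping argument is a valid way to carry out the check the paper leaves as straightforward.
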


\begin{wrapfigure}{r}{0.35\textwidth}
\vspace{-20pt}
\centering
\begin{tikzpicture}[scale=0.4]
\foreach \x in { (0,0), (1,0), (0.5,{sin(60)})}
\foreach \y in { (0,0), (3,0), (1.5,{3*sin(60)})}
\path[draw,fill=red,fill opacity=0.5] {\x+\y} circle (0.5);
\foreach \x in {
	(2,0), (1,{2*sin(60)}), (2,{2*sin(60)}), (3,{2*sin(60)}), (1.5,{1*sin(60)}), (2.5,{1*sin(60)}) }
\path[draw,fill=blue,fill opacity=0.5] \x circle (0.5);
\foreach \x in { (0,0), (1,0), (2,0), (0.5,{sin(60)}), (1.5,{sin(60)}), (1,{2*sin(60)})}
\foreach \y in { (0,0), (3,0), (1.5,{3*sin(60)})}
\path[draw,fill=blue,fill opacity=0.5,shift={(7,0)}] {\x+\y} circle (0.5);
\foreach \x in { (2,{2*sin(60)}), (3,{2*sin(60)}), (2.5,{1*sin(60)}) }
\path[draw,fill=red,fill opacity=0.5, shift={(7,0)}] \x circle (0.5);
\end{tikzpicture}
\vspace{-5pt}
\caption{Dividing one triangle into four}
\label{nice}
\vspace{-10pt}
\end{wrapfigure}
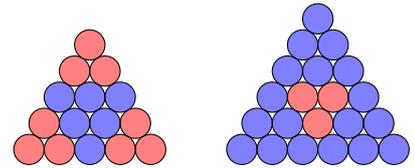

Note that all three inequalities become equalities if we replace $M(k)$ with
$k+2 \choose k$ throughout. For the first inequality, this is immediate. The
latter two are illustrated in Figure \ref{nice}.

\begin{proof}

For the first inequality, we claim that if $\pi$ is $T(k)$-minimal, then $\pi
\ominus (1 \ldots k+2)$ is $T(k+1)$-minimal. This follows from Lemma \ref{min}
together with the fact that $T(k+1) = T(k) \ominus \{0\} \times \{0 \ldots
k+1\}$.

Now suppose $\pi$ is $T(k)$-minimal and $\sigma$ is $T(k+1)$-minimal. We claim
that $(\pi \oplus \pi) \ominus (\pi \oplus \sigma)$ is $T(2k+2)$-minimal and
$(\pi \oplus \sigma) \ominus (\sigma \oplus \sigma)$ is $T(2k+3)$-minimal. By
Lemma \ref{min}, it suffices to prove that $T(2k+2) = (T(k) \oplus T(k))
\ominus (T(k) \oplus T(k+1))$ and $T(2k+3) = (T(k) \oplus T(k+1)) \ominus
(T(k+1) \oplus T(k+1))$. This is straightforward.
\end{proof}

Repeated application of Lemma \ref{epic} starting from $M(3) \ge 12$ gives the
following.

\begin{cor}\label{punkt}
For any $k \ge 3$, we have $M(k) > 1.07{k+2 \choose 2}$, and hence $C(k) >
1.07{k+2 \choose 2}$.
\end{cor}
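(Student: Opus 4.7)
The plan is to work with the normalized ratio $a_k := M(k)/\binom{k+2}{2}$ and show $a_k > 1.07$ for all $k \ge 3$ by strong induction. Using the identities $\binom{k+3}{2} = \binom{k+2}{2} + (k+2)$ and $3\binom{k+2}{2} + \binom{k+3}{2} = \binom{2k+4}{2}$, the three inequalities of Lemma~\ref{epic} translate into: $a_{k+1}$ is at least a weighted average of $a_k$ and $1$, while $a_{2k+2}$ and $a_{2k+3}$ are each convex combinations of $a_k$ and $a_{k+1}$. In particular, the doubling recurrences preserve any lower bound on $\min(a_k, a_{k+1})$, whereas the $+1$ recurrence slowly drags $a_k$ towards $1$.

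For the inductive step, I would take $k \ge 8$ and write $k = 2m+2$ or $k = 2m+3$ with $m = \lfloor (k-2)/2 \rfloor$, so $3 \le m < k$. Then $a_m, a_{m+1} > 1.07$ by strong induction, and the doubling inequality gives $a_k > 1.07$ immediately. So the real task is the base case $k \in \{3,4,5,6,7\}$. For this I would combine $M(3) \ge 12$ with two easy preliminaries: $M(1) = 3$ (witnessed by $2\,1\,3$) and $M(2) \ge M(1) + 3 \ge 6$ (the first inequality of Lemma~\ref{epic} at $k=1$). Iterating Lemma~\ref{epic} then yields $M(4) \ge 17$, $M(5) \ge 23$, $M(6) \ge 30$, and crucially $M(7) \ge M(2) + 3M(3) \ge 42$, after which a direct check gives $a_k > 1.07$ for $k = 3, \ldots, 7$.

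The main obstacle is arithmetic tightness. The ratio $a_6 = 30/28 = 15/14$ is only just above $1.07$, and $a_7 > 1.07$ in fact requires $M(7) \ge 39$: the bare $+1$ recurrence gives only $M(7) \ge M(6) + 8 \ge 38$, which is too weak. The needed improvement comes from the doubling bound $M(7) \ge M(2) + 3M(3) \ge 42$, which is why the preliminary $M(2) \ge 6$ cannot be skipped. So the essential care lies in always taking the maximum over the three available recurrences in the base case; once past $k = 7$, the doubling-preserves-the-minimum argument handles all larger $k$ automatically.
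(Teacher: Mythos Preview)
Your proposal is correct and follows the same approach as the paper, whose proof consists only of the sentence ``Repeated application of Lemma~\ref{epic} starting from $M(3)\ge 12$ gives the following.'' Your detailed treatment---in particular the observation that the base case $k=7$ genuinely requires the extra input $M(2)\ge 6$ (itself derivable from Lemma~\ref{epic} together with the trivial $M(0)\ge 1$, or verified directly as you do)---makes explicit a point the paper glosses over.
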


For the next construction, we need a condition stronger than
`$(r,s)$-critical' but weaker than `$\{0\ldots r\}\times\{0\ldots
s\}$-minimal'. We say $\pi$ is \emph{$(r,s)$-sharp} if $\pi$ is
$(r,s)$-critical and $(0,s+1)$-coverable. We write $S(r, s)$ for the length of
the longest $(r,s)$-sharp permutation, so that $S(r, s) \le C(r, s)$.
By inspection, $\pi_{15}$ is (2,2)-sharp, so $S(2,2) \ge 15$.

\begin{lem}\label{ghee}
For any positive integers $a, b, c, d$, we have 
\[S(ac - 1, bd - 1) \ge S(a-1,
b-1) S(c-1, d-1).\]
\end{lem}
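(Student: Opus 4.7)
My plan is to take $\rho = \pi[\sigma]$, the inflation of $\pi$ by $\sigma$ (each point of $\pi$ is replaced by a scaled copy of $\sigma$), where $\pi$ achieves length $N_1 = S(a-1, b-1)$ and $\sigma$ achieves length $N_2 = S(c-1, d-1)$. This has the right length $N_1 N_2$. Proving $(ac-1, bd-1)$-sharpness requires three things: (i) $(0, bd)$-coverability, (ii) non-$(ac-1, bd-1)$-coverability, and (iii) that every proper pattern of $\rho$ is $(ac-1, bd-1)$-coverable.

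The key technical tool is a ``coordinate'' characterisation: $\pi[\sigma]$ is $(r,s)$-coverable iff there exist nonnegative integers $r_i, s_i$ indexed by columns $i \in [N_1]$ with $\sigma$ being $(r_i, s_i)$-coverable at each column, $\sum_{i \in D} r_i \le r$ for every decreasing subsequence $D$ of $\pi$, and $\sum_{i \in I} s_i \le s$ for every increasing subsequence $I$. The forward direction sets $r_i$ (respectively $s_i$) to the number of increasing (respectively decreasing) classes of a given cover meeting column $i$; since an increasing class of $\pi[\sigma]$ projects to an increasing subsequence of $\pi$, it meets any decreasing $D$ in at most one column. For the converse, weighted Dilworth produces $r$ increasing subsequences of $\pi$ covering each column $i$ with multiplicity $r_i$ (and dually $s$ decreasing), and at each column one assigns the passing tracks arbitrarily to the classes of a fixed $(r_i, s_i)$-cover of that copy of $\sigma$; any such assignment is automatically consistent because column-to-column transitions along correctly-oriented subsequences of $\pi$ preserve the relevant inequalities.

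With this in hand, (i) is immediate by uniformly setting $(r_i, s_i) = (0, d)$: the $s$-constraint becomes $d|I| \le bd$, which holds because $\pi$ is $(0, b)$-coverable. For (ii), I assume a hypothetical $(ac-1, bd-1)$-cover with data $(r_i, s_i)$; $(c-1, d-1)$-criticality of $\sigma$ forces $r_i \ge c$ or $s_i \ge d$ at each column, so $[N_1] = R \cup B$ accordingly. The two constraints then give that every decreasing subsequence of $\pi|_R$ has length at most $a-1$ and every increasing subsequence of $\pi|_B$ has length at most $b-1$, so $\pi|_R$ is $(a-1, 0)$-coverable and $\pi|_B$ is $(0, b-1)$-coverable, yielding an $(a-1, b-1)$-cover of $\pi$ and contradicting $(a-1, b-1)$-criticality of $\pi$.

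The main obstacle is (iii). Given a removed point $(i_0, j_0)$ of $\rho$, I would use $(a-1, b-1)$-criticality of $\pi$ to pick a cover of $\pi \setminus \{i_0\}$, partitioning $[N_1] \setminus \{i_0\} = A \sqcup B'$ with $\pi|_A$ being $(a-1, 0)$-coverable and $\pi|_{B'}$ being $(0, b-1)$-coverable, and then set $(r_i, s_i) = (c, d-1)$ on $A$, $(0, d)$ on $B'$, and $(c-1, d-1)$ at $i_0$. The column coverability conditions hold by $(c-1, d-1)$-criticality of $\sigma$ (giving $(c, d-1)$-coverability of $\sigma$ and $(c-1, d-1)$-coverability of $\sigma \setminus \{j_0\}$) and sharpness of $\sigma$ (giving $(0, d)$-coverability). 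The $r$-constraint boils down to $|A \cap D| \le a-1$ for every decreasing $D$ of $\pi$, which holds since $\pi|_A$ is $(a-1, 0)$-coverable. The $s$-constraint is the delicate one: after simplification (using $|I| \le b$) it reduces to requiring that $A \cup \{i_0\}$ meet every length-$b$ increasing subsequence of $\pi$, which holds because $\pi|_{B'}$ contains no length-$b$ increasing subsequence. Multi-point removals follow because a proper pattern is coverable whenever some one-point-removal super-pattern is.
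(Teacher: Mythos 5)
Your proof is correct, and the overall skeleton mirrors the paper's: both use the tensor product $\pi\otimes\sigma$ and verify the same three conditions (sharpness, non-coverability, coverability after one deletion). The genuine difference is your coordinate characterisation: you factor out a clean lemma stating that $\pi[\sigma]$ is $(r,s)$-coverable iff one can assign $(r_i,s_i)$ per column with local coverability of each copy plus the two global weighted-Dilworth-type constraints $\sum_{i\in D}r_i\le r$ and $\sum_{i\in I}s_i\le s$. The paper instead argues directly at each of the three stages, describing the covers explicitly and counting subsequences passing through each copy; if one unwinds the paper's part (iii), the replication scheme there (each of the $a-1$ increasing classes used $c$ times, the $(0,b)$-cover used $d-1$ times, the $b-1$ decreasing classes once, and $c-1$ singleton tracks left for the punctured copy) produces exactly the same $(r_i,s_i)$ assignment that you write down. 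So the content is identical; what your version buys is modularity and reuse (one lemma handles all three checks and any future tensor-product argument), at the cost of invoking the weighted form of Dilworth's theorem for the converse direction, which the paper avoids by constructing the cover by hand. One small point to make rigorous in your converse: weighted Dilworth guarantees \emph{at least} $r_i$ increasing tracks through column $i$; when there are strictly more than $r_i$, you should say that the surplus tracks are simply assigned no points at that column (which is fine, since skipping a column along an increasing subsequence of $\pi$ still yields an increasing subsequence of $\pi\otimes\sigma$), so that the resulting classes remain a genuine partition.
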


Note that, as before, the inequality becomes an equality if we replace $S(r,
s)$ with $(r+1)(s+1)$.

\begin{proof}

Our tool for proving this lemma is the \emph{tensor product} of permutations.
Given permutations $\pi$ of length $n$ and $\sigma$ of length $m$, their tensor
product $\pi \otimes \sigma$ has length $nm$ and is given explicitly by $(\pi
\otimes \sigma)( (i-1)m + j) = (\pi(i)-1)m + \tau(j)$. We think of this as $n$
copies of $\tau$, ordered like $\pi$. For example, $(1 3 2) \otimes (2 1) = 2 1
6 5 4 3$.

Now it suffices to prove that if $\pi$ is $(a-1, b-1)$-sharp and $\tau$ is
$(c-1, d-1)$-sharp, then $\pi \otimes \tau$ is $(ac-1, bd-1)$-sharp.

There are three things to check. In each case, we use the fact that an
increasing subsequence of $\pi \otimes \tau$ is obtained by picking an
increasing subsequence $I$ of $\pi$ and then an increasing subsequence of
$\tau$ for every copy of $\tau$ corresponding to a point in $I$.

Firstly, we claim that $\pi \otimes \tau$ is $(0, bd)$-coverable. Simply pick
$b$ decreasing subsequences to cover $\pi$ and use each one $d$ times to cover
the corresponding copies of $\tau$.

Secondly, we claim that $\pi \otimes \tau$ is not $(ac-1, bd-1)$-coverable.
Suppose it were. Each copy of $\tau$ must either have at least $c$ increasing
subsequences or $d$ decreasing subsequences passing through it. Thus we get two
sets $C$ and $D$ which cover all of $\pi$. Because $\pi$ is not $(a-1,
b-1)$-coverable, by Lemma \ref{perfect} either $C$ has a decreasing subsequence
of length $a$ or $D$ has an increasing subsequence of length $b$. In the first
case, we get a contradiction as we used less than $ac$ increasing subsequences
to cover each point of $C$ $c$ times. The second case is analogous.

Finally, we claim that any proper subsequence of $\pi \otimes \tau$ is $(ac-1,
bd-1)$-coverable. Suppose we remove a term from the $i$th copy of $\tau$. First
pick a $(0, b)$-covering of $\pi$, and make $d-1$ copies of each of these $b$
subsequences. Then pick an $(a-1, b-1)$-covering of $\pi$ minus the $i$th term,
making 1 copy of each decreasing sequences and $c$ copies of each increasing
sequence. Now each copy of $\tau$ except the $i$th one has either $d-1 + 1 = d$
decreasing sequences through it, or $c$ increasing seqeunces and $d-1$
decreasing sequences; either way it is fully covered. So far we have used
$(a-1)c = ac-c$ increasing sequences and $b(d-1) + b-1 = bd-1$ decreasing
sequences, so we have $c-1$ increasing sequences left. This is precisely enough
to cover the the $i$th copy since it already has $d-1$ decreasing sequences
through it and it is missing one term. \end{proof}

Starting from $S(3-1, 3-1) \ge 15$, repeated application of Lemma \ref{ghee}
gives $S(3^n-1, 3^n-1) \ge 15^n$, and hence $C(3^n-1, 3^n-1) \ge 15^n$. The
following lemma lets us use this to get a lower bound on $C(k)$.

\begin{lem}\label{nio}
For any $r, s, k$, we have $C(r+s) \ge C(r, s)$ and $C(k+1) > C(k)$.
\end{lem}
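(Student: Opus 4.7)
My approach is to produce longer critical permutations by augmenting shorter ones with small monotone auxiliary sequences, using Lemma~\ref{hom} to compute the resulting $D$-sets.

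For $C(r+s) \ge C(r,s)$, I would start with an $(r,s)$-critical $\pi$ of length $C(r,s)$ and consider the augmentation $\sigma = \pi \oplus 1 \ominus 1$ of length $|\pi|+2$. By Lemma~\ref{hom}, $D(\sigma)$ agrees with $D(\pi)$ on the region $r', s' \ge 1$, while extending the $r=0$ column by one (so $(0,s') \in D(\sigma)$ whenever $s' \le L$ where $L$ is the longest increasing subsequence of $\pi$) and the $s=0$ row by one (so $(r',0) \in D(\sigma)$ whenever $r' \le M$ where $M$ is the longest decreasing subsequence). The verification has two steps: first, $T(r+s) \subseteq D(\sigma)$, which uses the downclosure $D(\pi) \supseteq [0,r]\times[0,s]$ together with the axis extensions; second, every proper pattern of $\sigma$ is $(r+s)$-coverable, handled by casing on whether the removed element lies in $\pi$ (giving $\pi' \oplus 1 \ominus 1$ which is $(r,s)$-coverable by hypothesis), in the $\oplus 1$ position (giving $\pi \ominus 1$), or in the $\ominus 1$ position (giving $\pi \oplus 1$).

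For $C(k+1) > C(k)$, I would analogously augment a $k$-critical $\pi$ of length $C(k)$ to produce a $(k+1)$-critical permutation of strictly greater length. The candidate construction is $\sigma = \pi \ominus I_{k+2}$ of length $|\pi| + k + 2$. By Lemma~\ref{hom}, $D(\sigma) = D(\pi) \ominus D(I_{k+2})$, which shifts each row of $D(\pi)$ with $s' \le k+1$ one step to the right, thereby filling $T(k+1) \setminus T(k)$ since $D(\pi) \supseteq T(k)$. For the proper patterns: removing an element of $\pi$ yields $\pi' \ominus I_{k+2}$, which is $(k+1)$-coverable via the $k$-coverability of $\pi'$ combined with one extra increasing subsequence; removing an element of $I_{k+2}$ yields $\pi \ominus I_{k+1}$, which is $(k+1)$-coverable provided the longest increasing subsequence of $\pi$ is exactly $k+1$ (so that $(0,k+1) \notin D(\pi)$). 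A symmetric construction $D_{k+2} \oplus \pi$ handles the case when longest decreasing subsequence of $\pi$ equals $k+1$.

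The main obstacle for the second part is handling the case where both longest increasing and longest decreasing subsequences of $\pi$ exceed $k+1$; here the simple skew/direct sums fail because the proper pattern obtained by shortening the auxiliary $I_{k+1}$ (respectively $D_{k+1}$) is itself not $(k+1)$-coverable. The workaround is likely to either argue that in this case $\pi$ is \emph{already} $(k+1)$-critical (giving $C(k+1) \ge C(k)$) and then extend via a further $\oplus 1$ or $\ominus 1$ to get strict inequality, or to substitute a different $k$-critical permutation of length $C(k)$ with controlled extremes. For the first part, the analogous obstacle is that the mid-diagonal points $(r', s')$ with $r' > r$, $s' \ge 1$, and $r' + s' = r+s$ may fail to lie in $D(\pi)$; dealing with this may require a more refined construction (for instance, a longer padding $\pi \oplus D_b \ominus I_a$, or invoking the $(r,s)$-sharp version of $\pi$).
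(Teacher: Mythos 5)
Your approach differs from the paper's in a crucial way, and you have correctly identified the place where it breaks down but not the fix. You try to make the augmented permutation $\sigma$ \emph{itself} $(r+s)$-critical (resp.\ $(k+1)$-critical), which requires $T(r+s) \subseteq D(\sigma)$ as well as coverability of all proper patterns. As you note, this fails in general: for the first part, $D(\pi)$ need not contain the mid-diagonal points $(r',s')$ with $r'>r$ or $s'>s$ and $r'+s'\le r+s$, so a small padding like $\pi\oplus 1\ominus 1$ will not in general force $T(r+s)\subseteq D(\sigma)$; for the second part, removing an element of $I_{k+2}$ gives $\pi\ominus I_{k+1}$, which is not $(k+1)$-coverable when $\pi$ has a long increasing subsequence. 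Your proposed workarounds are also not sound: a $k$-critical permutation with long monotone runs is generally not $(k+1)$-critical (it is by definition $(k+1)$-coverable), and there is no guarantee of a length-$C(k)$ $k$-critical permutation with ``controlled extremes,'' nor that $S(r,s)=C(r,s)$.

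The paper's key idea, which your proposal is missing, is to \emph{not} aim for criticality of the padded permutation. Instead one pads $\tau$ generously (take $L$ a skew sum of $r$ copies of $1\ldots N$ and $R$ a direct sum of $s$ copies of $N\ldots 1$ with $N>r+s$, and form $(L\oplus\tau)\ominus R$; for the second part take $(1\ldots k+2)\ominus\tau$), then proves only that this padded permutation is not $(r+s)$-coverable (resp.\ not $(k+1)$-coverable). One then passes to \emph{some} critical subpattern $\pi$ of the padded permutation. The point is to show that $\pi$ must contain all of $\tau$: if $\pi$ omitted part of $\tau$, it would be a pattern of the padding applied to a proper pattern $\tau'$ of $\tau$, which is coverable since $\tau'$ is coverable and the padding is designed so that coverability is preserved under it, and then $\pi$ would be coverable too — a contradiction. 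This ``pad, then extract a critical subpattern, then argue it still contains $\tau$'' step completely sidesteps all the delicate questions about whether the padded permutation is itself critical, and it is exactly what your direct-verification strategy lacks.
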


\begin{proof}
We first show that if $\tau$ is $(r,s)$-critical, then there exists an
$r+s$-critical permutation containing $\tau$ as a pattern. It follows that
$C(r+s) \ge C(r,s)$.

Let $L$ be a skew sum of $r$ copies of $1\ldots N$, and let $R$ be a direct sum
of $s$ copies of $N\ldots 1$, where $N > r+s$. For any permutation $\sigma$, we
claim that $(L \oplus \sigma) \ominus R$ is $r+s$-coverable iff $\sigma$ is
$(r,s)$-coverable.

If $\sigma$ is $(r,s)$-coverable, then we extend the $r$ increasing
subsequences to cover $L$ and the $s$ decreasing subsequences to cover $R$.
Conversely, suppose $(L \oplus \sigma) \ominus R$ is $r+s$-coverable. Each of
copy of $1 \ldots N$ in $L$ must have at least one increasing subsequences
through it, since even $r+s$ decreasing subsequences would not be enough.
Moreover, the $r$ copies cannot share any increasing subsequences, by
construction. So we need at least $r$ increasing subsequences, and similarly at
least $s$ decreasing subsequences. So $(L \oplus \sigma) \ominus R$ must in
fact be $(r, s)$-coverable, and hence $\sigma$ is $(r,s)$-coverable.

Thus if $\tau$ is $(r,s)$-critical, then $(L \oplus \tau) \ominus R$ contains
an $r+s$-critical pattern $\pi$, and $\pi$ must contain $\tau$, as needed.

For the second part, we show that if $\tau$ is $k$-critical, then there exists
a $k+1$-critical permutation containing $\tau$ as a proper pattern. The key is
to observe that for any permutation $\sigma$, the permutation $(1\ldots k+2)
\ominus \sigma$ is $k+1$-coverable iff $\sigma$ is $k$-coverable. Indeed a
$k+1$-covering of $(1\dots k+2) \ominus \sigma$ must use one increasing
sequence to cover $1 \ldots k+2$, and then we are left to $k$-cover $\sigma$.

Thus as before we get a $k+1$-critical permutation $\pi$ containing $\tau$ as a
pattern. We cannot have $\pi = \tau$ since $\tau$ itself is $k+1$-coverable.
\end{proof}

\begin{cor}
For any $n$, $k$, we have
\begin{align*}
C(2\cdot 3^n -2) &\ge 15^n \textup{ and} \\
C(k) &\ge (k/6)^{2.46}
\end{align*}
\end{cor}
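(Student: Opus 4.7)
The plan is to derive both inequalities directly from Lemmas \ref{ghee} and \ref{nio}, using the example $\pi_{15}$ (which gives $S(2,2) \ge 15$) as the seed. No new ideas are needed beyond combining what has already been established.

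For the first inequality, I would iterate Lemma \ref{ghee} along the diagonal. Specifically, applying the lemma with $a = b = 3^m$ and $c = d = 3$ yields
\[ S(3^{m+1} - 1,\ 3^{m+1} - 1) \ge S(3^m - 1,\ 3^m - 1) \cdot S(2, 2) \ge 15 \cdot S(3^m - 1,\ 3^m - 1). \]
A straightforward induction on $n$ starting from $S(2,2) \ge 15$ then gives $S(3^n - 1,\ 3^n - 1) \ge 15^n$. Since $S(r,s) \le C(r,s)$ by definition, this yields $C(3^n - 1,\ 3^n - 1) \ge 15^n$, and the first part of Lemma \ref{nio} (namely $C(r+s) \ge C(r,s)$) converts this into $C(2\cdot 3^n - 2) \ge 15^n$, as required.

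For the second inequality, I would use strict monotonicity of $C$ (the second part of Lemma \ref{nio}) to interpolate between the sparse values $2\cdot 3^n - 2$. Given $k \ge 6$, pick $n$ maximal with $2\cdot 3^n - 2 \le k$; then $2\cdot 3^{n+1} - 2 > k$, so $3^n > (k+2)/6 \ge k/6$. Monotonicity then gives
\[ C(k) \ge C(2\cdot 3^n - 2) \ge 15^n = (3^n)^{\log_3 15} > (k/6)^{\log_3 15}. \]
A numerical estimate of $\log_3 15 = 1 + \log_3 5 \approx 2.4650$ shows that $\log_3 15 > 2.46$, and since $k/6 \ge 1$, this gives $C(k) > (k/6)^{2.46}$. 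The remaining cases $k < 6$ are handled trivially, as $(k/6)^{2.46} < 1 \le C(k)$.

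There is no real obstacle: all the conceptual work has been done in the preceding lemmas. The one place where a small amount of care is needed is checking that the interpolation from $C(2\cdot 3^n - 2)$ to general $C(k)$ loses no more than a factor of $(k/6)$ in the exponent, which is why the final exponent is forced to be slightly less than $\log_3 15$. Everything else reduces to a short induction and the numerical estimate $\log_3 15 > 2.46$.
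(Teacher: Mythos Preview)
Your proposal is correct and follows essentially the same route as the paper: iterate Lemma~\ref{ghee} from the seed $S(2,2)\ge 15$ to obtain $S(3^n-1,3^n-1)\ge 15^n$, pass to $C(2\cdot 3^n-2)$ via Lemma~\ref{nio}, and then interpolate to general $k$ by monotonicity together with the numerical estimate $\log_3 15>2.46$. Your choice of $n$ (maximal with $2\cdot 3^n-2\le k$) is a minor rephrasing of the paper's choice $k/6\le 3^n\le k/2$, and your explicit handling of the cases $k<6$ is a small tidiness improvement over the paper's version.
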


\begin{proof} The first inequality follows from $C(2 \cdot 3^n - 2) \ge
C(3^n-1, 3^n-1)$. For the second inequality, pick $n$ such that $k/6 \le 3^n
\le k/2$. Now $C(k) \ge C(2 \cdot 3^n - 2) \ge 15^n = (3^n)^{\log_3 15} \ge
(k/6)^{2.46}$, as needed.
\end{proof}

We remark that $\oplus$, $\ominus$, and $\otimes$ can be defined on arbitrary
graphs. Thus all results in this section remain true in the more general
setting of graph cocolouring, with the same proofs, except for Lemma \ref{ghee}
which only remains true for perfect graphs. If we instead consider the problem
of partitioning the vertex set of a graph into two sets $A$ and $B$ such that
$A$ has no independent set of size $r+1$ and $B$ has no clique of size $s+1$,
then all the results in this section remain true, including Lemma \ref{ghee},
with similar proofs.

\section{Separable permutations}

We were able to construct long critical permutations using skew sums, direct
sums, and tensor products, but only given some base cases. We now explain this
phenomenon.

A permutation is said to be \emph{separable} if it can be obtained from the
length-one permutation $(1)$ by repeatedly taking skew sums and direct sums.
For example, 2 1 4 3 is separable but 3 1 4 2 is not. It can be shown that $\pi
\otimes \sigma$ is separable if $\pi$ and $\sigma$ are, by inducting on $\pi$
and using the right-distributivity $(\pi \oplus \tau) \otimes \sigma = (\pi
\otimes \sigma) \oplus (\tau \otimes \sigma)$, $(\pi \ominus \tau) \otimes
\sigma = (\pi \otimes \sigma) \ominus (\tau \otimes \sigma)$.

We will classify $k$-critical and $(r,s)$-critical separable permutations,
showing that they all have lengths $k+2 \choose 2$ and $(r+1)(s+1)$,
respectively. This explains why our lemmas about $\oplus$, $\ominus$, and
$\otimes$ alone are not enough to get non-trivial lower bounds on $C(k)$ and
$C(r,s)$. The proof crucially relies on our more general notion of being
critical with regard to a downset.

\begin{lem}\label{down}
Given a downset $A$, if a direct sum $\pi \oplus \sigma$ of permutations is
$A$-critical, then $A = B \oplus C$ for some $B$, $C$ such that $\pi$
is $B$-critical and $\sigma$ is $C$-critical.

Similarly, if a skew sum $\pi \ominus \sigma$ is $A$-critical, then $A = B
\ominus C$ for some $B$, $C$ such that $\pi$ is $B$-critical and
$\sigma$ is $C$-critical.
\end{lem}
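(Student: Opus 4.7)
The plan is to construct $B$ and $C$ explicitly, guided by Lemma~\ref{hom}, which says $D(\pi \oplus \sigma) = D(\pi) \oplus D(\sigma)$. Since $\pi \oplus \sigma$ is not $A$-coverable, $A \subseteq D(\pi) \oplus D(\sigma)$; so, writing $a_r = |A_r|$, $p_r = |D(\pi)_r|$, $q_r = |D(\sigma)_r|$ for the column sizes (all non-increasing in $r$), we have $a_r \le p_r + q_r$. Under this identification, a decomposition $A = B \oplus C$ with $B \subseteq D(\pi)$ and $C \subseteq D(\sigma)$ is precisely a splitting $a_r = b_r + c_r$ with $b_r \le p_r$, $c_r \le q_r$, and both sequences $b_r, c_r$ non-increasing in $r$.

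I would produce such a split by setting
\[
b_r \;=\; \max_{r' \ge r}\max(0,\, a_{r'} - q_{r'}),
\qquad
c_r \;=\; a_r - b_r,
\]
and letting $B, C$ be the corresponding downsets. The max over $r' \ge r$ makes $b_r$ non-increasing automatically, and since $a_{r'} - q_{r'} \le p_{r'} \le p_r$ for $r' \ge r$, we get $b_r \le p_r$. Taking $r' = r$ in the max gives $c_r \le q_r$. A short case analysis (on whether the max defining $b_r$ is attained at $r' = r$, in which case $c_r = q_r \ge q_{r+1} \ge c_{r+1}$, or at some $r' > r$, in which case $b_r = b_{r+1}$ and $c_r - c_{r+1} = a_r - a_{r+1} \ge 0$) shows $c_r$ is non-increasing as well. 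So $B, C$ are downsets with $A = B \oplus C$ and the required containments.

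Next I would verify that $\pi$ is $B$-critical and $\sigma$ is $C$-critical. Since $B \subseteq D(\pi)$, $\pi$ is not $B$-coverable; symmetrically for $\sigma$. For the other half of criticality, every proper pattern of $\pi \oplus \sigma$ is $A$-coverable, so in particular $\tau \oplus \sigma$ is $A$-coverable for every proper pattern $\tau$ of $\pi$. By Lemma~\ref{hom} this produces some $r$ with $a_r > |D(\tau)_r| + q_r$, and then $b_r \ge a_r - q_r > |D(\tau)_r|$ shows $(r, |D(\tau)_r|) \in B \setminus D(\tau)$, so $\tau$ is $B$-coverable. The dual argument, using $c_r \ge a_r - p_r$ (from $b_r \le p_r$) applied to proper patterns $\pi \oplus \tau'$, handles $\sigma$. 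The skew-sum statement is proved by an entirely analogous row-wise construction, exchanging the roles of $r$ and $s$.

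The main obstacle, and conceptually the only nontrivial step, is the choice of column split: the naive options $b_r = \min(a_r, p_r)$ or $b_r = \max(0, a_r - q_r)$ each fail to keep both $b$ and $c$ non-increasing, so one must replace the latter by its monotone envelope $\max_{r' \ge r}$ to control both sides simultaneously. Once this split is in hand, the containment properties and criticality follow directly from $A \subseteq D(\pi) \oplus D(\sigma)$ and the two families of proper patterns $\tau \oplus \sigma$ and $\pi \oplus \tau'$.
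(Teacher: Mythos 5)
Your proof is correct and follows essentially the same approach as the paper. The paper's proof gets to $A \subseteq D(\pi) \oplus D(\sigma)$ exactly as you do, then sketches the existence of the column split $A = B \oplus C$ with $B \subseteq D(\pi)$, $C \subseteq D(\sigma)$ by saying one should remove points from $D(\pi), D(\sigma)$ starting from larger $r$; your explicit formula $b_r = \max_{r' \ge r}\max(0, a_{r'} - q_{r'})$ is precisely a worked-out greedy instance of that sketch, and your criticality verification is the contrapositive of the paper's ($B \subseteq D(\tau)$ would force $A \subseteq D(\tau \oplus \sigma)$).
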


\begin{proof}
By symmetry, it suffices to prove the first part.

Since $\pi \oplus \sigma$ is not $A$-coverable, we have $A \subseteq D(\pi
\oplus \sigma) = D(\pi) \oplus D(\sigma)$, using Lemma \ref{hom}. It follows
that there exist downsets $B$, $C$ such that $B \subseteq D(\pi)$, $C \subseteq
D(\sigma)$, and $A = B \oplus C$ -- the idea is to start from $B_0 \coloneqq
D(\pi)$, $C_0 \coloneqq D(\sigma)$, and successively remove points $(r,s)$ from
them, starting from larger $r$.

Now we claim that $\pi$ is $B$-critical. We know that $\pi$ is not
$B$-coverable. Moreover, if $B \subseteq D(\tau)$ for some proper pattern
$\tau$ of $\pi$, then $A = B \oplus C \subseteq D(\tau \oplus \sigma)$
contradicting the assumption that every proper pattern of $\pi \oplus \sigma$
if $A$-coverable. So $\pi$ is $B$-critical. Similarly $\sigma$ is $C$-critical,
as needed. \end{proof}

\begin{cor}\label{sepp}
If $\pi$ is separable and $A$-critical, then $\pi$ has length $|A|$.
In particular if $\pi$ is separable and $k$-critical, then $\pi$ has length
$k+2 \choose 2$. If $\pi$ is separable and $(r,s)$-critical, then $\pi$ has length
$(r+1)(s+1)$.
\end{cor}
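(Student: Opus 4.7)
The plan is a straightforward induction on the length $n$ of $\pi$, with Lemma \ref{down} doing essentially all the work.

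For the base case $n = 1$, the only separable permutation is $(1)$, and one checks that $D((1)) = \{(0,0)\}$: the empty permutation is the unique $(0,0)$-coverable permutation, while $(1)$ is both $(1,0)$- and $(0,1)$-coverable. So if $(1)$ is $A$-critical then $A \subseteq \{(0,0)\}$, and $A$ cannot be empty (else every permutation, including the empty one, would fail to be $A$-coverable). Hence $A = \{(0,0)\}$ and $|A| = 1 = n$.

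For the inductive step, a separable permutation of length $\ge 2$ can be written as $\pi_1 \oplus \pi_2$ or $\pi_1 \ominus \pi_2$ with $\pi_1$, $\pi_2$ separable and of strictly smaller length. By Lemma \ref{down} we can write $A = B \oplus C$ (resp.\ $A = B \ominus C$) with $\pi_1$ being $B$-critical and $\pi_2$ being $C$-critical. The inductive hypothesis gives $|\pi_1| = |B|$ and $|\pi_2| = |C|$, and since $|B \oplus C| = |B| + |C|$ (and likewise for $\ominus$), we conclude $|\pi| = |\pi_1| + |\pi_2| = |B| + |C| = |A|$.

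The two specific statements then follow by identifying critical notions with downset-critical notions. Since $D(\pi)$ is downward closed, $\pi$ is $k$-coverable iff $T(k) \not\subseteq D(\pi)$, so being $k$-critical is the same as being $T(k)$-critical; and $\pi$ is $(r,s)$-coverable iff $\{0,\ldots,r\}\times\{0,\ldots,s\} \not\subseteq D(\pi)$, so being $(r,s)$-critical is the same as being $\{0,\ldots,r\}\times\{0,\ldots,s\}$-critical. Since $|T(k)| = \binom{k+2}{2}$ and $|\{0,\ldots,r\}\times\{0,\ldots,s\}| = (r+1)(s+1)$, the claimed length formulas drop out. There is no real obstacle here — the only thing to be careful about is the bookkeeping in the base case and the identification between being $(r,s)$-critical in the original sense and being critical with respect to the corresponding downset.
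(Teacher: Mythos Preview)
Your proof is correct and follows exactly the route the paper takes: induction on the separable structure of $\pi$, with Lemma \ref{down} supplying the decomposition $A = B \oplus C$ (or $B \ominus C$) and the additivity $|B \oplus C| = |B| + |C|$ finishing it off. The paper's own proof is the one-line version of yours (``follows from Lemma \ref{down} and induction on $\pi$''), so you have simply unpacked the base case and the identification of $k$- and $(r,s)$-criticality with the appropriate downsets.
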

\begin{proof}
The first claim follows from Lemma \ref{down} and induction on $\pi$. The other
claims are special cases of the first.
\end{proof}

We can say a bit more here. Namely, if $\pi$ is a separable permutation, then
$|D(\pi)|$ is the length of $\pi$, and $\pi$ is $A$-critical iff $A = D(\pi)$.
This gives a procedure to generate all separable $A$-critical permutations:
consider all ways to write $A$ as $B \oplus C$ or $B \ominus C$, and for each
one recursively find the corresponding separable critical permutations.

We remark that if $A \oplus B$ is of the form $\{0\ldots r\} \times \{0 \ldots
s\}$ for some $r$, $s$, then so are $A$ and $B$. Thus if we were only
interested in $(r,s)$-critical separable permutations, then we would not need
to consider general downsets. However, in order to understand $k$-critical
separable permutations, we must consider general downsets.

A graph built from the single-vertex graph using the graph analogues of
$\oplus$ and $\ominus$ is called a \emph{cograph}. It is immediate that a graph
is a cograph iff it corresponds to a separable permutation. Thus Corollary
\ref{sepp} characterises cocolourable cographs in terms of forbidden subgraphs.

\section{Upper bounds}

We now prove that $C(r, s)$ and $C(k)$ are always finite. The following
combinatorial lemma will be key. It can be deduced from the sunflower lemma
\cite{sunflower}, but we will get a slightly better bound using an ad-hoc
argument. The argument we present is essentially the same as the argument used
to prove Theorem 3.1 in \cite{feder}.

\begin{lem}\label{comb}

Fix $r, d \in \N$ with $r \ge 2$ and let $U$ be any nonempty set. Suppose for
every $i \in U$, we have a partial Boolean function $P_i : U \setminus \{i\}
\to [2]$. Suppose the $P_i$ are all close in the sense that for any $i, j \in
U$, there are at most $d$ points $k \in U \setminus\{i,j\}$ where $P_i(k) \ne
P_j(k)$.

Then there exists a least integer $N = N(r, d)$ such that if $|U| > N$, then we can
find a total Boolean function $P : U \to [2]$ such that for any $S \subseteq U$
of size at most $r+1$, there exists $i \in U \setminus S$ such that $P(k) =
P_i(k)$ for all $k \in S$.

In fact $N(r, d) \le \min(4r^{d+1}, (4r)^{d/2+1})$.
\end{lem}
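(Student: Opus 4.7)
The plan is to take $P$ to extend some carefully chosen $P_{i_0}$ and then analyse the resulting family of ``deviation sets'' via a sunflower-type argument.

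Fix any $i_0 \in U$ and let $P : U \to [2]$ be a total function agreeing with $P_{i_0}$ on its domain (filling in values arbitrarily elsewhere). For each $i \in U$ define the deviation set $D_i \coloneqq \{k \in U \setminus \{i\} : P_i(k) \ne P(k)\}$. The closeness hypothesis immediately gives $|D_i| \le d + 1$, since on $U \setminus \{i, i_0\}$ the function $P_i$ differs from $P_{i_0} = P$ in at most $d$ points, and the single point $i_0$ contributes at most one more disagreement. The desired conclusion is then equivalent to: no $S \subseteq U$ with $|S| \le r+1$ is a transversal of the family $\{D_i : i \in U \setminus S\}$.

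Suppose this fails: some $S$ of size at most $r+1$ meets every $D_i$ for $i \notin S$. I would apply the sunflower lemma to $\{D_i\}_i$, whose members have size $\le d+1$, to extract a sunflower with at least $2r+3$ petals $D_{i_1}, \ldots, D_{i_{2r+3}}$ and core $C$. If $C = \emptyset$ then the petals are pairwise disjoint, so any $S$ with $|S| \le r+1$ can contain at most $r+1$ of the $i_j$'s and hit at most $r+1$ of the $D_{i_j}$'s, leaving at least one $j$ with $i_j \notin S$ and $D_{i_j} \cap S = \emptyset$---contradicting badness of $S$. If $C \ne \emptyset$, redefine $P$ by flipping it on $C$: this shrinks $|D_{i_j}|$ by $|C| \ge 1$ on every petal and sets up a recursion on $d$. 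The bound $4r^{d+1}$ should follow by decrementing $d$ by one at each recursion step; the sharper $(4r)^{d/2+1}$ will require a finer accounting that effectively pairs two decrements, trading $|U|$ more aggressively against $d$.

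The main obstacle is controlling the side effects of the flip on $C$: while it shrinks the petal $D_{i_j}$'s, it enlarges the $D_i$'s of some indices $i$ outside the sunflower. Getting the recursion to close requires either restricting to a sub-universe on which the overhead remains small, or organising the induction around a monotone potential function that weighs the total ``badness'' of the family in a way that the flip strictly decreases. The two quantitative bounds in the lemma should correspond to two different ways of setting up this potential.
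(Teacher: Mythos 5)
Your proposal does not constitute a proof: you reach the crux (a sunflower with nonempty core, where you want to flip $P$ on the core and recurse) and then explicitly concede that you do not know how to make the recursion close. That concession is the whole difficulty. Flipping $P$ on $C$ does shrink the petal deviation sets $D_{i_j}$, but it can enlarge $D_i$ for every $i$ outside the sunflower, so the invariant $|D_i| \le d+1$ is destroyed and there is no obvious monotone quantity to induct on. Neither of your suggested repairs (restricting to a sub-universe, or a potential function) is fleshed out, and it is not clear either one works directly. So this is a sketch with a hole at the decisive step, not a proof. (The paper itself remarks that the sunflower lemma can be used, so your instinct is not wrong, but the bound one gets that way is worse and the bookkeeping is nontrivial; you have not supplied it.)

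The paper's actual argument avoids all of this by not fixing $P$ up front. It fixes a base index $i$, and for each candidate flip set $T \subseteq U \setminus \{i\}$ considers the function $P_T$ obtained by flipping $P_i$ on $T$. If $P_T$ is not the desired $P$, there is a witness set $S_T$ of size at most $r+1$ on which no $P_k$ agrees with $P_T$; crucially, because $P_i$ agrees with $P_T$ off $T \cup \{i\}$, the witness must intersect $T \cup \{i\}$, so each $T$ spawns at most $r$ children $T \cup \{j\}$ with $j \in S_T \setminus (T \cup \{i\})$. One grows a tree of such $T$'s of depth $d+1$, with total span at most $4r^{d+1}$. If $|U|$ exceeds this span, pick $j$ outside the span; taking $T$ maximal in the tree such that $P_j$ disagrees with $P_i$ on all of $T$ (so $|T| \le d$ by closeness) and examining the witness $S_T$ produces a further disagreement point, contradicting maximality. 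The $(4r)^{d/2+1}$ bound comes from a refinement that first fixes a pair $i, j$ realising the maximum distance $d$ and enumerates over the $2^d$ restrictions to the disagreement set, halving the tree depth. This adaptive, tree-of-flips construction is what your sunflower recursion would need to replicate, and it sidesteps the side-effect problem because the ``flips'' are never committed globally; they are just candidate functions whose failure witnesses guide the search.
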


\begin{proof}

We first explain why $N(r, d) \le 4r^{d+1}$.

Fix an arbitrary $i \in U$. The idea is to extend $P_i$ to all of $U$ and then
try change it a bit to get the desired $P$. We show that this works when $U$ is
sufficiently large.

Let $p \in [2]$ be arbitrary. For any $T \subseteq U \setminus \{i\}$ we get a
Boolean function $P_T : U \to [2]$ by taking $P_T(i) = p$, and $P_T(j) = P(j)$
iff $j \not \in T$ for $j \in U \setminus \{i\}$. We may assume that this $P_T$ does
not satisfy the desired properties, i.e. that there exists $S_T \subseteq U$ of
size at most $r+1$ such that no $P_k$ agrees with $P_T$ on all of $S_T$. We
must have $i \in S_T$ or $S_T \cap T \ne \emptyset$ since $P_i$ agrees with
itself. For $j \in S_T \setminus (T \cup \{i\})$, say $T \cup \{j\}$ is a
\emph{child} of $T$.

Let $\mathcal F_p$ be the minimal collection of subsets of $U \setminus \{i\}$
such that $\emptyset \in \mathcal F_p$, and if $T \in \mathcal F_p$ has size at
most $d$, then all its children are also in $\mathcal F_p$. Let $\mathcal T_p =
\bigcup_{T \in \mathcal F_p} T$, and let $\mathcal T = \{i\} \cup \mathcal T_1
\cup \mathcal T_2$.

We claim that $|\mathcal T| \le 4r^{d+1}$. It suffices to prove that $|\mathcal
T_p| < 2r^{d+1}$. We have $|\mathcal T_p| \le |\mathcal F_p|$, since each new
set in $\mathcal F_p$ introduces at most one new element. By construction,
$\mathcal F_p$ has $1$ element of size 0, at most $r$ elements of size $1$,
etc, and at most $r^{d+1}$ elements of size $d+1$. So $|\mathcal F_p| \le 1 + r
+ \ldots r^{d+1} < 2r^{d+1}$, as desired.

Finally, suppose $\mathcal T$ is not all of $U$. Pick $j \in U \setminus
\mathcal T$. Consider $P_j$. Let $p = P_j(i)$. Let $T \in \mathcal F_p$ be
maximal such that $P_j(k) \ne P_i(k)$ for all $k \in T$. Since $P_i$ and $P_j$
are close, $T$ has size at most $d$. By definition of $S_T$, we know that
$P_j(k) \ne P_T(k)$ for some $k \in S_T$. Now this contradicts maximality of
$T$, since the child $T \cup \{k\}$ of $T$ is also in $\mathcal F_p$. This
finishes the proof that $N(r, h) \le 4r^{d+1}$.

Let us now prove that $N(r, h) \le (4r)^{d/2+1}$. We may assume that, for some
$i, j$, the functions $P_i$ and $P_j$ differ in exactly $d$ points, since
otherwise we could take $d$ smaller. Fix such $i$ and $j$ and write $D$ for the set
where they differ. Given $Q : D \to [2]$, we have that $Q$ differs with $P_i$
or $P_j$ in at least $d/2$ points. Say $Q$ differs with $P_i$ in at least $d/2$
points. As before, for $p \in [2]$ and $T \subseteq U \setminus (D \cup
\{i\})$, we define $P_T : U \to [2]$ by $P_T(k) = Q(k)$ for $k \in D$, $P_T(i)
= p$, and $P_T(k) = P_i(k)$ iff $k \not \in T$ for $k \in U \setminus (D \cup
\{i\})$. The rest of the proof proceeds as before: for every $Q$ and $p$ we
construct a tree of sets $T$ where this time the maximum size allowed is
$d/2+1$ instead of $d+1$. The set $\mathcal T \subseteq U$ we end up with has
size at most $d + 2 + 2^{d+1}(r + r^2 + \ldots + r^{\left \lfloor d/2 \right
\rfloor+1}) \le 2^{d+1} \cdot 2r^{d/2+1} = (4r)^{d/2+1}$, as needed.
\end{proof}

In the other direction, we have $N(r, d) \ge r^{d/2}$. To see this, let $U$ be
a rooted tree with $d/2 + 1$ layers where every non-leaf node has $r$ children.
For $i \in U$, define $P_i : U \setminus \{i\} \to [2]$ by taking $P_i(j) = 2$
iff $j$ lies on the path from the root to $i$. The $P_i$ are close since each
one takes the value 2 at most $d/2$ times. Suppose there were a $P : U \to [2]$
with the desired properties. Then $P$ must take the value 2 at the root, since
all $P_i$'s do, and $1$ at all leaves. We get a contradiction by looking at the
deepst vertex where $P$ takes the value 2.

We now explain how Lemma \ref{comb} relates to permutations.

\begin{cor}\label{bound}
For any $r, s$ with $r \ge s$ and $r \ge 2$, we have $C(r, s) \le N(r, 2rs)$.
Hence $C(r, s) \le \min(4r^{2rs+1}, (4r)^{rs+1})$.
\end{cor}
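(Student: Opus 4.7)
\smallskip

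The plan is to apply Lemma \ref{comb} to extract an $(r,s)$-covering of a hypothetical overly long $(r,s)$-critical permutation, yielding a contradiction.

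Let $\pi$ be an $(r,s)$-critical permutation of length $n$, and set $U = [n]$. By $(r,s)$-criticality, for each $i \in U$ the permutation $\pi \setminus \{\pi(i)\}$ admits an $(r,s)$-covering, which assigns each point in $U \setminus \{i\}$ to one of $r$ increasing subsequences or one of $s$ decreasing subsequences. Define $P_i : U \setminus \{i\} \to [2]$ by $P_i(k) = 1$ if $k$ lies in an increasing subsequence of this covering and $P_i(k) = 2$ if it lies in a decreasing one. The key observation is that any set of points that is both an increasing subsequence in the $P_i$-covering and a decreasing subsequence in the $P_j$-covering is simultaneously increasing and decreasing in $\pi$, hence has at most one element. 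Since there are $rs$ such pairs of classes of each mixed type, the set $\{k : P_i(k) \ne P_j(k)\}$ has size at most $2rs$, so the closeness hypothesis holds with $d = 2rs$.

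Now suppose $n > N(r, 2rs)$. By Lemma \ref{comb} there is a total $P : U \to [2]$ such that every $S \subseteq U$ of size at most $r+1$ agrees with some $P_i$ (with $i \notin S$) on $S$. Set $A = P^{-1}(1)$ and $B = P^{-1}(2)$; I claim $\pi|_A$ has no decreasing subsequence of length $r+1$, and $\pi|_B$ has no increasing subsequence of length $s+1$. Indeed, if $S \subseteq A$ were a decreasing subsequence of length $r+1$, pick $i \notin S$ with $P_i \equiv 1$ on $S$; then all $r+1$ points of $S$ lie in the $r$ increasing subsequences of the $P_i$-covering, forcing two points of $S$ into a common increasing subsequence, which is impossible since $S$ is decreasing. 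The other claim is symmetric, using $s+1 \le r+1$ (this is where $r \ge s$ is used) so that the property of $P$ applies.

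By Lemma \ref{perfect} it follows that $\pi|_A$ is $(r,0)$-coverable and $\pi|_B$ is $(0,s)$-coverable, so $\pi$ itself is $(r,s)$-coverable, contradicting criticality. Hence $C(r,s) \le N(r, 2rs)$, and the second inequality follows from the bound on $N(r,d)$ in Lemma \ref{comb}. The main conceptual step is the translation from covering data to Boolean functions together with the pairwise closeness bound; once these are in place, the contradiction via Dilworth's theorem (Lemma \ref{perfect}) is immediate.
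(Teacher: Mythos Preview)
Your proof is correct and follows essentially the same approach as the paper: define $P_i$ from an $(r,s)$-covering of $\pi$ minus the $i$th term, establish closeness with $d=2rs$ via the pigeonhole on increasing/decreasing class pairs, and then use the $P$ from Lemma~\ref{comb} together with Lemma~\ref{perfect} to derive an $(r,s)$-covering of $\pi$. Your direct counting for closeness and your explicit remark on where the hypothesis $r\ge s$ enters (so that $s+1\le r+1$ and Lemma~\ref{comb} applies to an increasing $(s+1)$-set in $B$) are minor expository differences, not substantive ones.
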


\begin{proof}

Let $\pi$ be an $(r, s)$-critical permutation of length $n$. We
wish to show that $n \le N(r, rs)$. For any $i \in [n]$, $\pi$ minus its $i$th
term is $(r, s)$-coverable, so pick an $(r, s)$-covering. Define $P_i : [n]
\setminus \{i\} \to [2]$ by $P_i(j) = 1$ if $j$ is in an increasing
subsequences and $2$ if it is in a decreasing subsequence. Given $i, j \in
[n]$, suppose there are $2rs+1$ points $k \in [n] \setminus \{i, j\}$ where
$P_i(k) \ne P_j(k)$. Then we can find a set $S \subseteq [n] \setminus \{i,
j\}$ of size $rs+1$ such that, without loss of generality, $P_i(k) = 1$ and
$P_j(k) = 2$ for all $k \in S$. Now $S$ is coverable by $r$ increasing
sequences, since $P_i$ comes from an $(r, s)$-covering, and also by $s$
decreasing sequences, since $P_j$ comes from an $(r, s)$-covering. This is a
contradiction. So the $P_i$ are all close.

Now suppose there is $P : [n] \to [2]$ as in Lemma \ref{comb}. By
construction, there can be no decreasing subsequence of length $r+1$ in $\{k
\in [n] \mid P(k) = 1\}$ and also no increasing subsequence of length $s+1$ in
$\{k \in [n] \mid P(k) = 2\}$. Thus by Lemma \ref{perfect}, $\pi$ is
$(r,s)$-coverable, contradicting the assumption that it is $(r,s)$-critical.
\end{proof}

We remark that the above argument applies not only to permutations, but more
generally to cocolouring perfect graphs.

Now that we have upper bounds on $C(r,s)$, finding upper bounds on $C(k)$ is
easy.

\begin{prop}\label{ub}
For any $k$, we have $C(k) \le \sum_{r+s = k} C(r, s)$. Hence, for $k \ge 3$,
we have $C(k) \le (4k)^{k^2/4 + 2}$.
\end{prop}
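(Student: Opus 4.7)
The plan is to first prove the combinatorial inequality $C(k) \le \sum_{r+s=k} C(r,s)$ by a ``union of critical witnesses'' argument, and then to plug in the bound from Corollary \ref{bound}.

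For the first inequality, I would take a $k$-critical permutation $\pi$ of length $n$. Since $\pi$ is not $k$-coverable, it is not $(r,s)$-coverable for any $(r,s)$ with $r+s = k$, so for each such pair $\pi$ contains an $(r,s)$-critical sub-pattern $\tau_{r,s}$, supported on some index set $S_{r,s} \subseteq [n]$ of size at most $C(r,s)$. Let $U = \bigcup_{r+s=k} S_{r,s}$. The key observation is that $\pi|_U$ still contains every $\tau_{r,s}$ as a pattern, so $\pi|_U$ fails to be $(r,s)$-coverable for every $r+s=k$, i.e.\ $\pi|_U$ is not $k$-coverable. Since $\pi$ is $k$-critical, every proper sub-pattern \emph{is} $k$-coverable, so we must have $U = [n]$, giving $n = |U| \le \sum_{r+s=k} C(r,s)$.

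For the explicit bound when $k \ge 3$, I would bound each term in the sum using Corollary \ref{bound}, exploiting the symmetry $C(r,s)=C(s,r)$ (from reversing the permutation, which swaps increasing and decreasing). So for each $(r,s)$ with $r+s=k$ I may assume $r \ge s$, whence $r \ge \lceil k/2 \rceil \ge 2$. If $s = 0$ then $C(r,0) = r+1 = k+1$, trivially below the target bound. Otherwise $s \ge 1$, and Corollary \ref{bound} gives $C(r,s) \le (4r)^{rs+1} \le (4k)^{k^2/4 + 1}$, using $rs \le (r+s)^2/4 = k^2/4$. Since there are $k+1$ pairs $(r,s)$ with $r+s=k$, we obtain $C(k) \le (k+1)(4k)^{k^2/4+1} \le (4k)(4k)^{k^2/4+1} = (4k)^{k^2/4+2}$.

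The only real obstacle is the combinatorial inequality; once one notices that a union of witnesses to non-$(r,s)$-coverability for all $r+s=k$ is itself a witness to non-$k$-coverability, the rest is a routine application of the bound already established in Corollary \ref{bound} and a one-line estimate on $rs$.
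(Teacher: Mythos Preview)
Your argument is correct and matches the paper's proof essentially line for line: the ``union of critical witnesses'' step is exactly how the paper handles the inequality $C(k)\le\sum_{r+s=k}C(r,s)$, and your derivation of the explicit bound just spells out the details behind the paper's one-line appeal to Corollary~\ref{bound} and $rs\le k^2/4$. The only extra care you took---handling the $s=0$ term separately and invoking the symmetry $C(r,s)=C(s,r)$ to ensure $r\ge 2$---is implicit in the paper and entirely appropriate.
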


\begin{proof}
For the first part, suppose $\pi$ is $k$-critical. Given $r+s = k$, $\pi$
cannot be $(r,s)$-coverable, so it contains some $(r,s)$-critical pattern
$\tau_{r,s}$. Now $\tau_{r,s}$ has length at most $C(r,s)$, so $\pi$ has
a pattern $\sigma$ of length at most $\sum_{r+s=k}C(r,s)$ which contains each
$\tau_{r,s}$ as a pattern. This $\sigma$ cannot be $k$-coverable, since it
is not $(r,s)$-coverable for any $r, s$, so it must be all of $\pi$, as needed.

The second part follows from the first together with Corollary \ref{bound} and
the bound $rs \le k^2/4$.
\end{proof}

Together with Lemma \ref{nio}, this shows that $C(k)$ and $C(r,s)$ have roughly
the same order of growth.

\section{Open questions}

This work leaves many questions unanswered. The main question is whether $C(r, s)$
and $C(k)$ grow polynomially or exponentially.

We also do not know any values of $C(r,s)$ or $C(k)$ exactly except for the
small values mentioned in Section 2. In particular, the values of $C(2, 1)$ and
$C(3)$ are not known. There is no (2,1)-critical permutatation of length 10,
11, or 12, by exhaustive search, so we conjecture that $C(2, 1) = 9$. Careful
analysis of the proof of Lemma \ref{comb} gives an upper bound $C(2, 1) \le
94$, which is an order of magnitude bigger than the conjectured value.

Similarly, there is no 3-critical permutation of length 13, so we conjecture
that $C(3) = 12$. Careful analysis of Proposition \ref{ub} gives an upper
bound $C(3) \le 192$.

\subsection*{Acknowledgements}

I thank my supervisor Imre Leader for many helpful discussions, in particular
related to Section 6 and the structure of this paper. I also thank my friend
Gheehyun Nahm for suggesting the idea for Lemma \ref{ghee}.

\printbibliography

\end{document}